\theoremstyle{plain} %default
\newtheorem{theorem}{Theorem}
\newtheorem{corollary}{Corollary}[theorem]
\theoremstyle{definition}
\newtheorem{definition}{Definition}[section]
\begin{document}

%%%%%%%%%%%%%%%%%%%%%%%%%%%%%%%%%%%%%%%%%%%%%%%%%%%%%%%%%%%%%%%%%%%
\pagenumbering{arabic}
%\linenumbers

\title{Conservation laws for the Kudryashov-Sinelshchikov equation of second order}

\author{Yuri Dimitrov Bozhkov  \\
Instituto de Matemática, Estatística e Computação Científica--IMECC,\\
 Universidade Estadual de Campinas -- UNICAMP \\ 
Rua S\'ergio Buarque de Holanda, 651, $13083$-$859$ - Campinas - SP, Brasil, \\
bozhkov@ime.unicamp.br
\and Stylianos Dimas  \\
Departamento de Matem\'atica, \\ Instituto Tecnol\'ogico da Aeron\'autica - ITA, \\
Pra\c ca Marechal Eduardo Gomes, 50,
Vila das Ac\'acias, \\ 
$12228-900$ - S\~ao Jose dos Campos - SP, Brasil, \\
sdimas@ita.br
\and Oscar Mario Londoño Duque \\
Instituto de Matemática, Estatística e Computação Científica--IMECC,\\
 Universidade Estadual de Campinas -- UNICAMP \\ 
Rua S\'ergio Buarque de Holanda, 651, $13083$-$859$ - Campinas - SP, Brasil, \\
olondon2@hotmail.com,	ra154278@ime.unicamp.br}

\date{19 April 2019}
\maketitle

\noindent{{\bf Abstract:} \rm We establish conservation laws for the second order Kudryashov– Sinelshchikov equation, which models pressure waves in liquid with bubbles. For this purpose we use the method of Nail Ibragimov based on the notion of nonlinear self-adjointness.} \\

\noindent\textit{Key-words}: Lie point symmetries; Nonlinear self-adjointness; Conservations laws; Nail Ibragimov’s method.

\section*{This paper is dedicated to Professor Nail Ibragimov in Memoriam}

\section{Introduction} 	
In \cite{CA.1}, Kudryashov and Sinelshchikov studied nonlinear pressure waves in a mixture of a liquid
and gas bubbles. Among other results and considerations, they obtained the following second order partial differential equation:
\begin{equation}\label{eqq1}
   u_t+ u u_x-(1+u)u_{xx}-u_{x}^2 =0.
\end{equation}
Here $t$ and $x$ are the independent variables and $u=u(t,x)$ denotes the dimensionless dependent variable. We shall call the Eq. $(\ref{eqq1})$ the Kudryashov-Sinelshchikov equation. 

The purpose of this paper is to establish nontrivial conservation laws by using the N. Ibragimov’s conservation theorem \cite{i2,i4,i6,i7}, which is done in section 3. For this aim we first study in section 2 the nonlinear self-adjointness of the more general class of partial differential equations
\begin{equation}\label{eq1}
u_t-f(u)u_x-g(u)u_{xx}+h(u)u_{x}^2 =0,
\end{equation}
where the functions $f,g,h$ satisfy the conditions
\begin{equation}\label{az}
f\neq 0,\; g\neq 0,\; g'\neq 0,
\end{equation}
and $'$ denotes the derivative $d/du$. 

We suppose that the reader is familiar with the basic concepts of Lie point symmetries of differential equations. The fundamental monographs in this area are \cite{ba,bcha,bk,i,ibr,ol,ov} .

\section{Nonlinear Self-Adjointness}

We first briefly present the main definitions in the N. Ibragimov's approach to nonlinear self-adjointness of differential equations adopted to our specific case. For further details the interested reader is directed to \cite{i7,i6,i4,i2,gan}.

Consider an $s-$th order evolution equation
\begin{equation}\label{ad1}
\mathfrak{F}(t,x,u_{(1)},u_{(2)},\cdots,u_{(s)})=0
\end{equation}
with  independent variables $t,x$ and a dependent variable $u$, where 
$u_{(1)},u_{(2)}$,.\newline ..$u_{(s)}$ denote the collection of $1,2,...,s-$th order partial derivatives of $u$.

\begin{definition}
 Let $\mathfrak{F}$ be a differential function  and $\nu = \nu(t,x)-$the new dependent variable, known as the adjoint variable or nonlocal variable \cite{i7}. The formal Lagrangian for  $\mathfrak{F}=0$ is the differential function defined by
   \begin{equation}\label{ad2}
   \mathfrak{L}:=\nu \mathfrak{F}.
   \end{equation}
\end{definition}

\begin{definition}
Let $\mathfrak{F}$ be a differential function and for the differential equation (\ref{ad1}), denoted by $\mathfrak{F}[u] = 0$, we define the adjoint differential function to $\mathfrak{F}$ by
\begin{equation}\label{ad3}
\mathfrak{F}^{*}:=\frac{\delta \mathfrak{L} }{\delta u}
\end{equation}
and the adjoint differential equation by
\begin{equation}\label{ad4}
\mathfrak{F}^{*}[u,\nu] = 0,
\end{equation}
where the Euler operator
\begin{equation}\label{ad5}
\frac{\delta}{\delta u}=\frac{\partial}{\partial u}+\sum_{m=1}^{\infty}D_{x_{i_{1}}}\cdots D_{x_{i_{m}}}\frac{\partial}{\partial {u_{x_{i_{1}}x_{i_{2}}\cdots x_{i_{m}}}}}
\end{equation}
and $D_{x_{i}}$is the total derivative operator with respect to $x_i$ defined by
\[D_{x_{i}}=\partial_{x_{i}}+u_{x_{i}}\partial_{u}+u_{x_{i}x_{j}}\partial_{u_{x_{j}}}+\cdots +u_{x_{i}x_{i_{1}}x_{i_{2}}\cdots x_{i_{n}}}\partial_{u_{x_{i_{1}}x_{i_{2}}\cdots x_{i_{n}}}}... \]
\end{definition}
\begin{definition}
The differential equation (\ref{ad1}) is said to be nonlinearly self-adjoint if there exists a substitution 
\begin{equation}\label{ad6}
\nu=\phi(t,x,u)\neq 0
\end{equation}
such that 
\begin{equation}\label{ad7}
\mathfrak{F}^{*}\mid_{\nu=\phi(t,x,u)}=\lambda \mathfrak{F}
\end{equation}
for some undetermined coefficient $\lambda =\lambda (t,x,u,\cdots ).$ If $\nu=\phi(u)$ in (\ref{ad6}) and (\ref{ad7}), the equation
(\ref{ad1}) is called quasi self-adjoint. If $\nu=u$, we say that the equation (\ref{ad1}) is strictly self-adjoint.
\end{definition}
Now we shall obtain the adjoint equation to the eq. (\ref{eq1}). For this purpose we write (\ref{eq1}) in the form (\ref{ad1}), where
\begin{equation}\label{ad8}
\mathfrak{F}:=u_t-f(u)u_x-g(u)u_{xx}+h(u)u_{x}^2 =0.
\end{equation}
Then the corresponding formal Lagrangian (\ref{ad2}) is given by 
\begin{equation}\label{ad9}
   \mathfrak{L}:=\nu (u_t-f(u)u_x-g(u)u_{xx}+h(u)u_{x}^2 )=0
   \end{equation}
   and the Euler operator (\ref{ad5}) assumes the following form:
   \begin{equation}\label{ad10}
\frac{\delta \mathfrak{L}}{\delta u}=\frac{\partial \mathfrak{L}}{\partial u}-D_{t}\frac{\partial \mathfrak{L}}{\partial {u_{t}}}-D_{x}\frac{\partial \mathfrak{L}}{\partial {u_{x}}}
+D_{x}^{2}\frac{\partial \mathfrak{L}}{\partial {u_{xx}}}.
\end{equation}
We calculate explicitly the Euler operator (\ref{ad10}) applied to $\mathfrak{L}$ determined by (\ref{ad9}). In this way we
obtain the adjoint equation (\ref{ad4}) to (\ref{eq1}):
\begin{eqnarray}\label{ad11}
\mathfrak{F}^{*}&=&-\nu _t+f(u)\nu _x-g(u)\nu_{xx}-2\nu g'(u)u_{xx}-2\nu h(u)u_{xx}\nonumber\\
&-&\nu h'(u)u_{x}^2-\nu g''(u)u_{x}^2-2g'(u)\nu_{x}u_{x}-2h(u)u_{x}\nu_{x}=0.
\end{eqnarray}

The main result in this section can be stated as follows.

\begin{theorem} The generalized  Kudryashov-Sinelshchikov equation $(\ref{eq1})$ is  nonlinearly self-adjoint. 

If $f(u) = a g(u) +b$, $a,b-$constants, and $f(u)$ itself is not a constant, then the substitution is given by
\begin{equation}
\phi(t,x,u)=(c_1 \exp (ax + abt) + c_2)\exp \left ({\displaystyle{-\int{\frac{g'(u)+h(u)}{g(u)}du}}}\right ).
\end{equation}

If $f(u) = b-$constant, then the corresponding substitution is given by
\begin{equation}
\phi(t,x,u)=(c_3 (x + bt) + c_4)\exp \left ({\displaystyle{-\int{\frac{g'(u)+h(u)}{g(u)}du}}}\right ).
\end{equation}

In the rest of the cases, the substitution is 
\begin{align}\label{uu1}
\phi(u)=c_5\exp \left ({\displaystyle{-\int{\frac{g'(u)+h(u)}{g(u)}du}}}\right ).& 
\end{align}

The constants $c_1,c_2,c_3,c_4,c_5$ are such that $(c_1,c_2)\neq (0,0)$, $(c_3,c_4)\neq (0,0)$ and $c_5\neq 0$.
\end{theorem}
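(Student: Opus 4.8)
The plan is to impose the defining condition $\mathfrak{F}^{*}\mid_{\nu=\phi(t,x,u)}=\lambda\mathfrak{F}$ on the adjoint expression $(\ref{ad11})$ and to convert it into a determining system by comparing the coefficients of the independent derivative monomials. First I would substitute $\nu=\phi(t,x,u)$ into $(\ref{ad11})$ using
\begin{equation*}
\nu_t=\phi_t+\phi_u u_t,\qquad \nu_x=\phi_x+\phi_u u_x,\qquad \nu_{xx}=\phi_{xx}+2\phi_{xu}u_x+\phi_{uu}u_x^2+\phi_u u_{xx},
\end{equation*}
and treat $u_t,\,u_{xx},\,u_x^2,\,u_x$ together with the derivative-free part as independent quantities. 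Matching the coefficients of $u_t$ on the two sides gives at once $\lambda=-\phi_u$.

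Reading off the coefficient of $u_{xx}$ next yields the central relation
\begin{equation*}
g(u)\,\phi_u+\bigl(g'(u)+h(u)\bigr)\phi=0,
\end{equation*}
which, since $g\neq 0$, integrates in the variable $u$ to
\begin{equation*}
\phi(t,x,u)=\psi(t,x)\,\exp\!\left(-\int\frac{g'(u)+h(u)}{g(u)}\,du\right)
\end{equation*}
for an arbitrary smooth function $\psi$ of $t,x$. I expect the coefficients of $u_x^2$ and of $u_x$ to impose no further conditions: inserting this factorized $\phi$ together with the relation $\phi_u/\phi=-(g'+h)/g$ into those coefficients reduces each, after a short computation, to an identity --- for the $u_x^2$ term the surviving expression factors through $g'+h-g\cdot\frac{g'+h}{g}\equiv 0$. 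This is the routine part of the argument.

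The essential content lies in the derivative-free coefficient, which gives
\begin{equation*}
-\psi_t+f(u)\,\psi_x-g(u)\,\psi_{xx}=0.
\end{equation*}
Since $\psi$ does not depend on $u$ whereas $f$ and $g$ do, differentiating in $u$ produces $f'(u)\psi_x=g'(u)\psi_{xx}$, and organizing the resulting case split so that it is exhaustive is, I expect, the main obstacle. If $\psi_x\equiv 0$ then $\psi_t\equiv 0$, $\psi$ is constant, and one obtains the generic substitution $(\ref{uu1})$. If $\psi_x\not\equiv 0$ then, using $g'\neq 0$, the ratio $f'(u)/g'(u)=\psi_{xx}/\psi_x$ must equal a constant $a$, so $f(u)=a\,g(u)+b$; when $a=0$ the function $f\equiv b$ is constant, $\psi_{xx}\equiv 0$, and $-\psi_t+b\psi_x=0$ with $\psi$ affine in $x$ forces $\psi=c_3(x+bt)+c_4$, while when $a\neq 0$ the function $f$ is non-constant, $\psi_{xx}=a\psi_x$ gives $\psi=C(t)e^{ax}+B(t)$, and substituting back --- the $g$-dependent terms cancel because $f=a g+b$ --- leaves $ab\,C(t)e^{ax}=C'(t)e^{ax}+B'(t)$, whence $C(t)=c_1 e^{abt}$ and $B(t)=c_2$, i.e. $\psi=c_1 e^{ax+abt}+c_2$. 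It remains to observe that the requirement $\phi\neq 0$ holds precisely under the stated nondegeneracy conditions on $c_1,\dots,c_5$, which finishes the proof.
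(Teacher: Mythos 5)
Your proposal is correct and follows essentially the same route as the paper: matching coefficients of $u_t,u_{xx},u_x^2,u_x$ and the derivative-free part, noting the $u_x^2$ and $u_x$ conditions are automatic, integrating the $u_{xx}$ condition to get $\phi=\psi(t,x)\exp\bigl(-\int\frac{g'+h}{g}\,du\bigr)$, and then analyzing $\psi_t-f\psi_x+g\psi_{xx}=0$ using the fact that $\psi$ is $u$-independent. The only cosmetic difference is in the final case split: the paper organizes it via the Wronskian $W(1,-f,g)=f''g'-f'g''$ (nonzero forcing $\psi$ constant, zero forcing $f=ag+b$), whereas you differentiate the equation with respect to $u$ and split on whether $\psi_x\equiv 0$; these are the same separation-of-variables argument and lead to identical conclusions.
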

\begin{proof}
 Substituting in (\ref{ad11}), and then in (\ref{ad7}), $\nu = \phi (t,x,u)$ and its respective partial derivatives, and comparing the corresponding coefficients we get four equations:
\begin{equation}\label{sad16}
 g(u)\phi_{u}+(h(u)+g'(u))\phi =0,
 \end{equation}
 \begin{equation}\label{sad16.3}
 g(u)\phi_{uu}+(h(u)+2g'(u))\phi_{u}+(h'(u)+g''(u))\phi=0.
 \end{equation}
\begin{equation}\label{vv1}
 g(u)\phi_{xu}+(h(u)+g'(u))\phi_x =0,
 \end{equation}
\begin{equation}\label{us}
 \phi_{t}-f(u)\phi_x+g(u))\phi_{xx} =0
 \end{equation}
We observe that Eq.(\ref{sad16.3}) is obtaned from Eq.(\ref{sad16}) by differentiation with respect to $u$ as well as that Eq.(\ref{vv1}) is obtaned from Eq.(\ref{sad16}) by differentiation with respect to $x$. Therefore we have to study only Eqs. (\ref{sad16}) and (\ref{us}).

Solving for $\phi$ in (\ref{sad16}) we obtain 
\begin{equation}\label{vv3}
 \phi(t,x,u)=M(t,x)\exp \left ({\displaystyle{-\int{\frac{g'(u)+h(u)}{g(u)}du}}}\right )
 \end{equation}
for certain function $M(t,x)$. Substituting (\ref{vv3}) into (\ref{us}) we get that  
\begin{equation}\label{vv2}
M_{t}-f(u)M_x+g(u)M_{xx} =0.
 \end{equation}

Consider now the Wronskian \[W=W(1,-f,g)=f''g'-f'g''.\]

1.) If $W\neq 0$ then $M_t=M_x=0$ from (\ref{vv2}). Hence $M=$const., and in this case the equation is quasi self-adjoint.

2.) Let $W=0$.  Since $g'\neq 0$ (see (\ref{az})) we divide by $g'^2$ and obtain
\[ \left ( \frac{f'(u)}{g'(u)}\right )'=0.\]
Thus $f'=a g'$ and $f(u) = a g(u) +b$, with $a,b-$constants. 

Let $a\neq 0$. We note that this condition is equivalent to the fact that the function $f(u)$ is not a constant.

We substitute this form of $f$ into (\ref{vv2}):
\begin{equation} \label{hh3}
M_t - b M_x + g(u)(M_{xx}-a M_x) =0.
\end{equation}
Differentiating (\ref{hh3}) with respect to $u$ we obtain that $(M_{xx}-a M_x)g'(u)=0$. Hence  
\begin{equation} \label{vv6} M_{xx}-a M_x=0\end{equation}
 since $g'\neq 0$. Further we substitute the general solution 
\[ M(t,x) = c_1(t) \exp (ax) +c_2(t)\]
of the last equation into $M_t-b M_x=0$ (see (\ref{hh3})), and obtain that 
\[ M(t,x) = c_1 \exp (ax + abt) + c_2,\]
where $c_1,c_2$ are arbitrary constants and $a\neq 0$. 

If $a=0$ then $f(u)=b-$constant and $M_{xx}=0$ from (\ref{vv6}). Therefore $M(t,x)= c_3(t) x +c_4(t)$. Substituting this into $M_t-b M_x=0$ we finally obtain that $M(t,x)= c_3(x+bt) +c_4$, where $c_3,c_4$ are arbitrary constants. \end{proof}

{\bf Observation.} The equations (\ref{sad16})--(\ref{us}) can be directly obtained from \cite{bs}.

This theorem has some immedate consequences.
\begin{corollary}\label{teor4.1} The generalized  Kudryashov-Sinelshchikov equation $(\ref{eq1})$ is  quasi self-adjoint with the substitution
\begin{align}\label{uu}
\phi(u)=M\exp \left ({\displaystyle{-\int{\frac{g'(u)+h(u)}{g(u)}du}}}\right )& 
\end{align}
where $M\neq 0$ is a constant.
\end{corollary}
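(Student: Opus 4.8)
The plan is to derive the corollary directly from the theorem just proved, since quasi self-adjointness is precisely the statement that a substitution of the form $\nu=\phi(u)$ (depending on $u$ alone) satisfies the condition $(\ref{ad7})$. The theorem already exhibits the full family of admissible substitutions, so the only work is to extract from that family those that do not depend on $t$ or $x$, verify they are nonzero, and confirm they still satisfy the defining relation.

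First I would observe that in every case treated in the theorem, the substitution has the factored form $\phi(t,x,u)=M(t,x)\,E(u)$ with $E(u)=\exp\left(-\int\frac{g'(u)+h(u)}{g(u)}\,du\right)$, and that this factor $E(u)$ never vanishes because it is an exponential. Hence, setting $M(t,x)$ equal to a nonzero constant $M$ in any of the three cases yields a legitimate substitution $\phi(u)=M\,E(u)\neq 0$. Concretely, this corresponds to the choices $c_1=0,\ c_2=M$ in the first case (permissible since $(c_1,c_2)\neq(0,0)$), $c_3=0,\ c_4=M$ in the second case, and $c_5=M$ in the third; in particular it is always available regardless of the form of $f$.

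Next I would check that this $\phi(u)$ genuinely satisfies $(\ref{ad7})$. By the analysis in the proof of the theorem, $(\ref{ad7})$ reduces to the system $(\ref{sad16})$--$(\ref{us})$; equations $(\ref{sad16.3})$ and $(\ref{vv1})$ are differential consequences of $(\ref{sad16})$, and since $\phi$ has no $x$- or $t$-dependence, $(\ref{vv1})$ and $(\ref{us})$ hold trivially. Equation $(\ref{sad16})$ holds because $E(u)$ was constructed as its solution via $(\ref{vv3})$ with $M$ constant. Therefore $\mathfrak{F}^{*}|_{\nu=\phi(u)}=\lambda\mathfrak{F}$ for a suitable $\lambda$, and since $\phi$ depends only on $u$, Definition $1.3$ gives exactly quasi self-adjointness, which establishes $(\ref{uu})$.

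There is no real obstacle here: the corollary is essentially a specialization of the theorem, and the main point to articulate clearly is that the nonvanishing of the exponential factor makes the constant choice of $M$ always admissible, so the quasi-self-adjoint substitution exists unconditionally — this is the content highlighted already in item 1.) of the theorem's proof, where the case $W\neq0$ forces $M$ to be constant.
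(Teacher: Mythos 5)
Your proposal is correct and follows the same route the paper intends: the corollary is presented there as an immediate consequence of the theorem, obtained by specializing $M(t,x)$ to a nonzero constant (exactly the situation already noted in case 1.) of the theorem's proof, where $W\neq 0$ forces $M$ constant), and your verification that $(\ref{sad16})$ holds by construction while $(\ref{vv1})$ and $(\ref{us})$ become trivial for $\phi=\phi(u)$ is precisely the check the paper leaves implicit. Nothing is missing.
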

\begin{corollary}\label{teor4}
The generalized  Kudryashov-Sinelshchikov equation $(\ref{eq1})$ is  strictly self-adjoint if and only
 if
 \begin{align}\label{sad1}
 &h(u)=-\frac{g(u)}{u}-g'(u).&  
\end{align}
\end{corollary}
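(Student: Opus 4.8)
The plan is to read off strict self-adjointness directly from the determining system obtained in the proof of the Theorem. By Definition (the one introducing (\ref{ad6})--(\ref{ad7})), equation (\ref{eq1}) is strictly self-adjoint precisely when the substitution $\nu=\phi(t,x,u)=u$ satisfies $\mathfrak{F}^{*}\mid_{\nu=u}=\lambda\,\mathfrak{F}$, and that identity was shown there to be equivalent to the four equations (\ref{sad16})--(\ref{us}). So the whole argument reduces to substituting $\phi=u$ into that system and recording the constraint it imposes on $f,g,h$.

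First I would note that for $\phi=u$ one has $\phi_t=\phi_x=\phi_{xx}=\phi_{xu}=0$, $\phi_{uu}=0$ and $\phi_u=1$. Hence equations (\ref{vv1}) and (\ref{us}) hold identically, since every term in them contains an $x$- or $t$-derivative of $\phi$; and (\ref{sad16.3}), being the $u$-derivative of (\ref{sad16}) (as already observed in the proof of the Theorem), is automatically satisfied once (\ref{sad16}) is. Thus the system collapses to the single equation (\ref{sad16}) evaluated at $\phi=u$, $\phi_u=1$, namely
\[
g(u)+u\bigl(h(u)+g'(u)\bigr)=0,
\]
and solving this for $h(u)$ gives exactly (\ref{sad1}). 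For the converse, if $h$ is given by (\ref{sad1}) then by construction $g(u)+u(h(u)+g'(u))=0$, so $\phi=u$ solves (\ref{sad16}), hence — by the remarks above — the entire system (\ref{sad16})--(\ref{us}); therefore $\mathfrak{F}^{*}\mid_{\nu=u}=\lambda\,\mathfrak{F}$ and the equation is strictly self-adjoint. (Alternatively one may simply impose that $\phi=u$ coincide with the quasi self-adjoint substitution (\ref{uu}) of Corollary \ref{teor4.1}, i.e.\ $u=M\exp\!\left(-\int\frac{g'+h}{g}\,du\right)$; differentiating in $u$ and simplifying yields the same condition.)

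There is essentially no obstacle here: the statement is a direct specialization of the computations already carried out for the Theorem. The only point deserving a moment's care is to make sure that \emph{all four} determining equations — not merely (\ref{sad16}) — are accounted for when the substitution, in general allowed to depend on $t$ and $x$, is taken here to be $u$ itself; as noted above the remaining three are verified automatically, so (\ref{sad1}) is both necessary and sufficient.
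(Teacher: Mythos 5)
Your proposal is correct and follows exactly the route the paper intends: the corollary is stated as an immediate consequence of the Theorem, obtained by setting $\phi=u$ in the determining system, where (\ref{vv1}), (\ref{us}) and (\ref{sad16.3}) are trivially or automatically satisfied and (\ref{sad16}) reduces to $g(u)+u\bigl(h(u)+g'(u)\bigr)=0$, i.e.\ (\ref{sad1}). The parenthetical alternative via the quasi self-adjoint substitution (\ref{uu}) is likewise consistent with the paper.
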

\begin{corollary} The Kudryashov-Sinelshchikov equation $(\ref{eqq1})$ is  nonlinearly self-adjoint with the substitution
 \begin{align}\label{sad13}
\phi(u)=c_1 \exp (-t-x) + c_2& 
\end{align}
with $(c_1,c_2)\neq (0,0)-$constants. It is not strictly self-adjoint equation.
\end{corollary}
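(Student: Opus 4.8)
The plan is to recognize equation (\ref{eqq1}) as a particular instance of the class (\ref{eq1}) and then read off the conclusion from the Theorem. First I would match coefficients: writing (\ref{eqq1}) in the form (\ref{ad8}) identifies
\[
f(u) = -u, \qquad g(u) = 1+u, \qquad h(u) = -1 .
\]
These satisfy the standing hypotheses (\ref{az}), since $f \not\equiv 0$, $g \not\equiv 0$, and $g'(u) = 1 \neq 0$, so the Theorem indeed applies to (\ref{eqq1}).

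Next I would determine which branch of the Theorem is in force. Since $-u = (-1)(1+u) + 1$, we have $f(u) = a\,g(u) + b$ with $a = -1$ and $b = 1$, and $f$ is manifestly not a constant; hence the first case of the Theorem applies, giving
\[
\phi(t,x,u) = \bigl( c_1 \exp(ax + abt) + c_2 \bigr)\exp\left( -\int \frac{g'(u) + h(u)}{g(u)}\, du \right).
\]
The key simplification is that here $g'(u) + h(u) = 1 + (-1) = 0$, so the $u$-dependent exponential factor collapses to a nonzero constant, which may be absorbed into $c_1$ and $c_2$. Substituting $a = -1$, $b = 1$, so that $ax + abt = -x - t$, yields precisely $\phi(u) = c_1 \exp(-t-x) + c_2$, and the requirement $\phi \neq 0$ in (\ref{ad6}) forces $(c_1,c_2)\neq(0,0)$, as stated.

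For the last assertion I would invoke Corollary~\ref{teor4}: the equation is strictly self-adjoint if and only if $h(u) = -g(u)/u - g'(u)$. Here the right-hand side equals $-(1+u)/u - 1 = -(1+2u)/u$, which is not identically equal to $h(u) = -1$ (equality would force $u = -1$). Hence the criterion fails and (\ref{eqq1}) is not strictly self-adjoint. I do not expect a genuine obstacle in this argument; the only point requiring a moment's attention is noticing that the vanishing of $g'(u) + h(u)$ degenerates the general formula of the Theorem to the two-parameter family $c_1\exp(-t-x) + c_2$.
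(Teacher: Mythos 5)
Your proposal is correct and matches the paper's (implicit) argument: the corollary is stated there as an immediate consequence of the Theorem, obtained exactly by identifying $f(u)=-u$, $g(u)=1+u$, $h(u)=-1$, noting $f=ag+b$ with $a=-1$, $b=1$ and $g'+h=0$ so the $u$-dependent exponential is trivial, and ruling out strict self-adjointness via Corollary~\ref{teor4}. No gaps; nothing further is needed.
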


The results in this section are compatible with the considerations of N. Ibragimov, \cite{i7}, pp. 20--21, on the self-adjointnes of the nonlinear heat equation.

\section{Conservation laws} 

Following the classical S. Lie algorithm it is easy to obtain that the Lie point symmetries of equation (\ref{eqq1}) are determined by the vector fields
\begin{equation}\label{lps} 
X_1=\partial_t, \;\; X_2=\partial_x,\;\; X_3=t\partial_t-t\partial_x-(1+u)\partial_u.
\end{equation}

In this section we shall establish some conservation laws for the Kudrya-shov-Sinelshchikov equation (\ref{eqq1}) using the conservation theorem of N. Ibra-gimov in \cite{i7}.

To begin with, we observe that Eq. (\ref{eqq1}) itself is a conservation laws since it can be written in the form 
\begin{equation}\label{n1}
D_t u + D_x [u^2/2 - (1+u)u_x]=0.
\end{equation}

Since the Eq. (\ref{eqq1}) is of second order, the formal Lagrangian contains derivatives up to order  two. Thus, the general formula in \cite{i7} for the components of the conserved vector is reduced to 
\begin{eqnarray}\label{lc4}
C^{i}&=&W\left [  \frac{\partial \mathfrak{L}}{\partial_{u_i}}-D_{j}(\frac{\partial \mathfrak{L}}{\partial_{u_{ij}}})\right ]+D_{j}[W]\left [\frac{\partial \mathfrak{L}}{\partial_{u_{ij}}}  \right ],
\end{eqnarray}
where 
\[W=\eta-{\xi}^j u_j,\] 
$i,j=1,2$, the formal Lagrangian 
\[ \mathfrak{L}=\nu (u_t+ u u_x-(1+u)u_{xx}-u_{x}^2)\] 
and ${\xi}^1,{\xi}^2,\eta $ are the infinitesimals of a Lie point symmetry admitted by 
Eq. (\ref{eqq1}), given in (\ref{lps}). 

If $c_1=0$ in the substitution (\ref{sad13}), then the resulting conservation laws are trivial. Therefore we can set $c_1=1$ and $c_2=0$ without loss of generality. Hence
\[ \nu= \exp(-t-x).\]
Using (\ref{eqq1}) and (\ref{lc4}) we obtain that the symmetry $X_2$ determines the following conserved vector $C=(C^{1},C^{2})$, where 
\[ C^1=-u_x \exp(-t-x),\;\;\;C^2= (u_t+(1+u)u_x)\exp(-t-x).\]

Similarly, for $X_1$ we get 
\[ C^1=-u_t \exp(-t-x),\;\;\;C^2= [u_t(1+u_x)+(1+u)u_{tx})\exp(-t-x).\]
Further, we apply the simplifying procedure of transfering terms of form $D_x(...)$ from $C^1$ to $C^2$, described on pp. 50--51 of \cite{i7}, in order to eliminate the second partial derivatives of $u$. After some work, the components of the conserved vector $C=(C^{1},C^{2})$ become 
\[ C^1=[u^2/2-(1+u)u_{x}] \exp(-t-x),\] 
\[C^2= [-u^2/2 + (1+u)u_t+(1+u)u_{x}]\exp(-t-x).\]

Analogously, for the symmetry $X_3$ we obtain conserved vector $C=(C^{1},C^{2})$ with
\[ C^1=[-(1+u)+t u^2/2-t u u_{x}] \exp(-t-x),\] 
\[C^2= [(1+u) +(1+u)u_x +u^2/2 + t u u_t-t u^2/2]
\exp(-t-x).\]


\begin{thebibliography}{99}

\bibitem{ba} G. W. Bluman and S. Anco, Symmetry and Integration Methods for
Differential Equations, Springer, New York, (2002).

\bibitem{bcha} G. W. Bluman, A. F. Cheviakov, and S. Anco, Applications of symmetry methods to partial differential equations, Springer, New York, (2010).

\bibitem{bk} G. W. Bluman and S. Kumei, Symmetries and Differential Equations,
Applied Mathematical Sciences 81, Springer, New York, (1989).

\bibitem{bs} Y. D. Bozhkov, K. A. A. Silva, Nonlinear self-adjointness of a 2D generalized second order
evolution equation, Nonlinear Analysis TMA 75 (2012) 5069 –- 5078.

\bibitem{gan} M. L. Gandarias, Weak self-adjoint differential equations, J. Phys. A: Math. Theor. 44 (2011) 262001, 6 pp.

\bibitem{i} N. H. Ibragimov, Transformation groups applied to mathematical physics, Nauka, Moscow. English transl.: D. Reidel Publishing Co., Dordrecht, (1985)

\bibitem{ibr}  N.H. Ibragimov, ed., CRC Handbook of Lie Group Analysis of Differential Equations, vols. 1--3, CRC Press, Boca Raton, Fl., 1994 -- 1996.

\bibitem {i2} N. H. Ibragimov, Integrating factors, adjoint equations and La-grangians, J. Math. Anal. Appl. 318 (2006), 742 -- 757.

\bibitem{i4} N. H. Ibragimov, A new conservation theorem, J. Math. Anal. Appl. 333 (2007), 311 -- 328.

\bibitem{i6} N. H. Ibragimov, Nonlinear self-adjointness and conservation laws, J. Phys. A: Math.
Theor. 44 (2011), 432002, 8pp.

\bibitem{i7} N. H. Ibragimov, Nonlinear self-adjointness in constructing conservation laws, Archives of ALGA 7/8 (2011),  1 -- 90.

\bibitem{CA.1}
Kudryashov, N. A., Sinelshchikov, D. I., Nonlinear waves in bubbly liquids with consideration for viscosity and heat transfer, Phys. Lett. A 374 (2010), 2011 -- 2016.

\bibitem{ol}  P. J. Olver, Applications of Lie groups to differential
 equations, Springer, New York, 1986.

\bibitem{ov} L. V. Ovsyannikov, Group analysis of differential
 equations, Nauka, Moscow, 1978. English transl., Academic Press, New York, 1982.

\end{thebibliography}
\end{document}